\documentclass[11pt]{amsart}
   \usepackage[margin=3.9cm]{geometry}

\numberwithin{equation}{section}
\usepackage{amsmath,amsfonts,amsthm,amssymb,amscd, verbatim,graphicx,color,multirow,booktabs, caption,tikz,tikz-cd, mathdots,bm}
\usepackage{tikz-cd}
 \usepackage[pagebackref]{hyperref} 
\usetikzlibrary{positioning}

\newtheorem{theorem}{Theorem}[section]
\newtheorem{lemma}[theorem]{Lemma}

      \theoremstyle{definition}

     \theoremstyle{remark}
     \newtheorem{remark}[theorem]{Remark}

\newcommand{\Alt}{\mathop{\mathrm{Alt}}}

\newcommand{\dl}{\mathop{\mathrm{dl}}}

\newcommand{\Sol}{\mathop{\mathrm{Sol}}}
\newcommand{\SolG}{\mathop{\mathrm{Sol_G}}}

 \definecolor{mycolor}{rgb}{0.55,0.0,0.16}
  \definecolor{myred}{rgb}{0.75,0.0,0.16} 
  \definecolor{mygreen}{rgb}{0.0,0.4,0.16} 
  \definecolor{myviolet}{rgb}{1,0,1} 
   \definecolor{mypink}{rgb}{0.67,0,0.47}
	
	\newcommand{\MYhref}[3][black]{\href{#2}{\color{#1}{#3}}}% 
	
\hypersetup{
colorlinks=true,
linkcolor=true,
linktocpage=true,
pageanchor=true,
hyperindex=true
}

 \AtBeginDocument{
     \hypersetup{
  linkcolor=mycolor,
  urlcolor=mypink,
citecolor=mygreen
}
     }

\makeatletter
\@namedef{subjclassname@2020}{%
  \textup{2020} Mathematics Subject Classification}
\makeatother

\subjclass[2020]{Primary: 20D10, 20D60}
\keywords{Solubilizer, arithmetic properties, Cauchy--Frobenius counting lemma} 

\author[Luca Sabatini]{Luca Sabatini}
\address{\parbox{\linewidth}{Luca Sabatini, Mathematics Institute, University of Warwick}}
\email{luca.sabatini@warwick.ac.uk, sabatini.math@gmail.com}

\title[Arithmetic properties of solubilizers]{Arithmetic properties of solubilizers}

\begin{document} 

\maketitle 

\begin{abstract}
        Let $G$ be a finite group and $x \in G$.
        In 2013, Hai-Reuven proved that the cardinality of the centralizer of $x$
        divides the cardinality of the solubilizer of $x$.
        Settling a conjecture of Mousavi, Poozesh and Zamani,
        we show that the cardinality of the normalizer of $\langle x \rangle$ is also a divisor.
        We actually give direct short proofs of more general results.
        \end{abstract}

        %\tableofcontents

\vspace{0.2cm}
\section{Introduction} \label{sec1}

Let $G$ be a finite group and $x \in G$.
          The {\itshape solubilizer} $\Sol_G(x)$ is the set of the elements $g \in G$ such that $\langle x,g \rangle$ is soluble.
           In his MSc thesis in 2013, Hai-Reuven proved that the cardinality of the centralizer of $x$ divides the cardinality of $\Sol_G(x)$ \cite[Proposition~2.13]{HR13}.
         Unfortunately, his clever proof based on Cauchy--Frobenius counting lemma does not appear to be published in a refereed journal.

In more recent years, various aspects of solubilizers have been investigated \cite{ADM23,AFS26,ALMM20,BLN23,Luc24,MPZ25}.
In particular, Mousavi, Poozesh and Zamani \cite[Conjecture~3]{MPZ25}
conjectured in 2022 that the cardinality of $\Sol_G(x)$ should also be divisible by the cardinality of the \emph{normalizer} $N_G(x)$ of $\langle x \rangle$.
The reason this is interesting is that the natural approaches to prove it fail immediately.
The conjecture has been studied in \cite{ACSS25,Mou26}, mostly for finite simple groups.

          In this note, we expand the ideas in Hai-Reuven's thesis and prove the following much stronger statement by induction:
           
            \begin{theorem} \label{thMain}
           Let $G$ be a finite group, $x \in G$, and $K \leqslant G$.
           Then
           $$ |N_G(x) \cap K| \qquad \mbox{divides} \qquad |\SolG(x) \cap K| . $$
          \end{theorem}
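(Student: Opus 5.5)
The plan is to argue by induction on $|K|$, after reducing to prime-power divisors. Write $N=N_G(x)\cap K$ and $S=\SolG(x)\cap K$. Since $|N|$ divides $|S|$ if and only if $|P|$ divides $|S|$ for every prime $p$ and every Sylow $p$-subgroup $P$ of $N$, it suffices to fix $p$ and $P$ and show that $|P|$ divides $|S|$. The first structural remark is that $N$ acts on $S$ by conjugation. If $n\in N$ and $g\in S$, then $\langle x,g^n\rangle=\langle x^n,g^n\rangle=\langle x,g\rangle^n$ is soluble and $g^n\in K$. A second remark is that $N\subseteq S$, since for $n\in N$ the group $\langle x,n\rangle$ has the normal cyclic subgroup $\langle x\rangle$ with cyclic quotient. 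More generally, if $g\in S$ and $y\in\langle x\rangle\cap K$, then $yg\in S$, because $\langle x,yg\rangle=\langle x,g\rangle$.

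To get orbits of full size, I would combine these two symmetries. Let $X=\langle x\rangle\cap K$, which is normalised by $N$. Then the semidirect product $X\rtimes N$ acts on $S$ via $g\mapsto a\,g^n$ with $a\in X$, $n\in N$. I would restrict this action to $X\rtimes P$, or to a suitable $p$-subgroup of it containing $P$, and apply the Cauchy--Frobenius counting lemma in Hai-Reuven's style:
$$ |S| \;=\; \sum_{\mathcal O} |\mathcal O| \;=\; \sum_{g \in \mathcal R} |P : \mathrm{Stab}_P(g)|, $$
where $\mathcal R$ is a set of orbit representatives. The orbits on which $P$ acts freely contribute multiples of $|P|$. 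The remaining elements are those $g$ fixed, up to the twist by $X$, by some subgroup $1\neq L\leqslant P$. In the untwisted case these are exactly the elements of $S\cap C_K(L)$.

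The inductive step handles these non-free elements. For $1\neq L\leqslant P$, set $K_L=C_K(L)$. If $L$ is not central in $K$, then $K_L<K$, and the key compatibility is
$$N_G(x)\cap K_L=C_N(L), \qquad \SolG(x)\cap K_L=S\cap C_K(L).$$
Induction then gives that $|C_N(L)|$, and hence $|C_P(L)|$, divides $|S\cap C_K(L)|$. I would run an inclusion--exclusion (Möbius inversion) over the lattice of subgroups $L\leqslant P$ to express the number of non-free elements as a signed combination of the quantities $|S\cap C_K(L)|$. The aim is then to show that every term is divisible by $|P:L|\cdot|C_P(L)|$-type factors whose total is a multiple of $|P|$, in the manner of Frobenius' theorem on the number of solutions of $x^n=1$. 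The degenerate case, where some $L$ is central in $K$ so that $K_L=K$, should be handled separately. In that case one can pass to $K/L$ only if $L\cap\langle x\rangle$ behaves well, or else absorb $L$ into the $X$-twist.

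I expect the main obstacle to be the bookkeeping of stabilisers for the twisted action $g\mapsto a g^n$. An element may be fixed by $n$ only modulo $X$, that is, $g^n=a^{-1}g$, rather than centralised outright. The fixed-point sets then look like cosets such as $\{g\in S: g^n g^{-1}\in X\}$, not centralisers, and the induction must be set up on a class of subgroups for which these sets are again of the form $\SolG(x)\cap K'$. My first attempt would be to choose $K'=\{g\in K : [g,L]\subseteq X\}$, which is a subgroup because $X$ is normalised by $N\supseteq L$. I would check that $N_G(x)\cap K'$ is the correct normaliser to feed into the induction. If this fails, I would drop the $X$-twist altogether and work with pure conjugation by $P$ and the Frobenius-style congruence alone.
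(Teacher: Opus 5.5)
There is a genuine gap: the case $N\cap Z(K)\neq 1$ is the entire difficulty of the theorem, and your proposal defers it without a working fix.

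Your generic case is sound. Suppose no nontrivial element of $N$ is central in $K$. Then for $1\neq h\in N$ the fixed-point set $S\cap C_K(h)=\SolG(x)\cap C_K(h)$ has $C_K(h)<K$, and induction gives that $|C_N(h)|$ divides it. The element-wise Cauchy--Frobenius sum $\sum_{h\in N}|C_S(h)|$, grouped into $N$-classes of size $|N:C_N(h)|$, then finishes the proof. The Sylow reduction and the M\"obius inversion over subgroups $L\leqslant P$ are unnecessary. The latter also does not obviously close: a $P$-class of subgroups has size $|P:N_P(L)|$, while induction only supplies $|C_P(L)|$.

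Neither of your proposed repairs for the central case works. Passing to $K/L$ is not an instance of the theorem, which lives in an ambient group containing $x$, and $x$ need not even normalize $L$. The $X$-twist fails because $X=\langle x\rangle\cap K$ is not normal in $K$, so the twisted fixed-point sets are not subgroups. In particular, $K'=\{g\in K:[g,L]\subseteq X\}$ need not be closed under multiplication. Take $G=K=S_4$, $x=(123)$ and $L=P=\langle(12)\rangle$. Then $K'$ consists of the $12$ elements $g$ with $(12)^g\in\{(12),(13),(23)\}$, yet it contains $(12)$, $(34)$ and $(123)$, which generate $S_4$.

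The missing idea is to quotient the \emph{set} $S$, not the group, by a subgroup of $N$ that is normal in $K$. If $z\in N\cap Z(K)$ and $g\in S$, then $z$ normalizes $\langle x,g\rangle$, so $\langle x,g,z\rangle$ is soluble and $gz\in S$. The paper pushes this to a maximal $T\leqslant N$ with $T\unlhd K$ such that $\langle x,a,T\rangle$ is soluble for every $a\in S$. Then $S$ is a union of $T$-cosets.

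Maximality forces $(N/T)\cap Z(K/T)=1$. Let $D/T$ be this intersection and $U=\langle x,a,T\rangle$. Then $D$ normalizes $U$, since $x^d\in\langle x\rangle$, $a^d\in aT$ and $T\unlhd K$. Moreover $UD/U$ is a quotient of the abelian group $D/T$, so $D$ would also qualify.

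Now apply Cauchy--Frobenius to $N/T$ acting on $S/T$. Let $L$ be the preimage in $K$ of $C_{K/T}(hT)$. For $hT\neq T$, the fixed points are $(S\cap L)/T$ and $C_{N/T}(hT)=(N\cap L)/T$. This $L$ is a proper subgroup of $K$ containing $T$, and induction applied to $L$ gives exactly the needed divisibility.

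In general $L$ is not a centralizer in $K$. Quotienting once by $N\cap Z(K)$ may create new central elements of $N/T$ in $K/T$, so the iteration (hence the maximal $T$) is unavoidable. This is why the statement must be proved for arbitrary subgroups $K$. Feeding the induction only with centralizers $C_K(L)$ or twisted sets cannot close the argument.
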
 

          The case $K=G$ establishes the aforementioned conjecture for the normalizers.
    The motivations behind this subgroup-relative version will be clarified later,
    but the key observation is that Hai-Reuven's methods fail badly in the induction step for the original statement.
    They also fail in proving Theorem~\ref{thMain}, but in a way that can be amended.

    With a different proof based on Hall's divisibility theorem, we also obtain a strengthening of the original result for centralizers.
    Writing $\dl(G)$ for the derived length of $G$ (and $\dl(G) = \infty$ if $G$ is not soluble),
    we set
          $$ \mathrm{Sol}_{G,\ell}(x) \> := \> \{ g \in G : \dl(\langle x,g \rangle) \leq \ell \} . $$
          
           \begin{theorem} \label{thHall}
Let $G$ be a finite group, $x\in G$, and $\ell \geq 1$. Then
\[
|C_G(x)| \qquad \mbox{divides} \qquad |\mathrm{Sol}_{G,\ell}(x)|.
\]
\end{theorem}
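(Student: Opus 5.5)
The plan is to rephrase $\mathrm{Sol}_{G,\ell}(x)$ as a set of homomorphisms from a suitable finitely generated group, and then apply P.~Hall's divisibility theorem. Let $F$ be the free group on $a,b$, and let $\Gamma_\ell := F/F^{(\ell)}$ be the free soluble group of rank $2$ and derived length $\ell$. A pair $(y,g)\in G^2$ satisfies $\dl(\langle y,g\rangle)\le \ell$ exactly when the assignment $a\mapsto y$, $b\mapsto g$ extends to a homomorphism $\Gamma_\ell\to G$. Indeed, the derived series is verbal, so $\langle y,g\rangle^{(\ell)}$ is the image of $F^{(\ell)}$. Hence
\[
\mathrm{Sol}_{G,\ell}(x) \;\longleftrightarrow\; \{\varphi\in \mathrm{Hom}(\Gamma_\ell,G) : \varphi(a)=x\}.
\]

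Next I will pass from a fixed $x$ to its conjugacy class. Let $\mathcal{H}$ be the set of $\varphi\in\mathrm{Hom}(\Gamma_\ell,G)$ with $\varphi(a)\in x^G$. Conjugation by $h\in G$ maps the fibre over $x$ bijectively onto the fibre over $x^h$. So $|\mathcal{H}| = |x^G|\cdot|\mathrm{Sol}_{G,\ell}(x)| = \frac{|G|}{|C_G(x)|}\,|\mathrm{Sol}_{G,\ell}(x)|$. The statement is therefore equivalent to
\[
|G| \ \text{ divides } \ |\{(y,g)\in x^G\times G : \dl(\langle y,g\rangle)\le\ell\}|.
\]

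To prove this I will invoke Hall's theorem in its Frobenius-type form. Let $\Gamma$ be a finitely generated group and $\gamma\in\Gamma$. For any conjugacy class $C$ of $G$, the number of homomorphisms $\Gamma\to G$ with $\gamma\mapsto C$ is then a multiple of $|G|$ (or at least of the relevant part, after summing with Möbius inversion over subgroups as in Hall's ``Eulerian functions''). The strategy would be to express the count via Hall's Möbius function. For each subgroup $H\le G$, let $\phi(H)$ be the number of pairs $(y,g)$ with $y\in x^G$ that generate $H$ and satisfy $\dl(H)\le \ell$. The condition $\dl(H)\le\ell$ depends only on $H$, so $|\mathcal{H}|$ is a sum of $\phi(H)$ over the soluble $H$ of derived length at most $\ell$. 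The group $N_G(H)$ acts freely by conjugation on generating pairs of $H$ modulo $C_G(H)$. Grouping the $H$ into $G$-conjugacy classes then gives orbit sizes $|G:N_G(H)|\cdot |\mathrm{Aut}\text{-type orbit}|$. The aim is to show that each conjugacy class of such $H$ contributes a multiple of $|G|$, or more precisely of $|G|/|C_G(x)|\cdot|C_G(x)|$.

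The main obstacle is this last step: making the divisibility hold term by term. Conjugation alone only gives divisibility by $|G:Z(G)|$-type quantities, so the missing factor must come from Hall's theorem. I would try first to apply Hall's theorem directly to $\Gamma_\ell$ with $\gamma=a$. The key check is that the hypothesis Hall needs, namely that the set of admissible images is a union of classes closed under the relevant operations, is satisfied because the defining condition is verbal and conjugation-invariant. If that form of Hall's theorem is not available verbatim, the fallback is to reduce to counting homomorphisms from $\langle a\rangle * \Gamma_\ell'$-type quotients. Frobenius' theorem (that $|C_G(x)|$ divides the number of $g$ with $g^n$ in a prescribed set) would then handle the cyclic factor.
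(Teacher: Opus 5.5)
Your first two steps are correct, but they only restate the problem. The number of homomorphisms $\Gamma_\ell\to G$ sending $a$ into $x^G$ equals $|x^G|\cdot|\mathrm{Sol}_{G,\ell}(x)|$, so saying this number is a multiple of $|G|$ \emph{is} the theorem. All the content is therefore deferred to your third step, and that step does not hold up.

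\textbf{The cited form of Hall's theorem is false.} Take $\Gamma=\langle a,b\mid a=b^2\rangle\cong\mathbb{Z}$ and $G$ of odd order. Exactly one homomorphism sends $a$ into the class $\{1\}$. So the fibre over $x=1$ has size $1$, not a multiple of $|C_G(1)|=|G|$. The same example shows that the hypothesis you propose to check (the defining condition is verbal and conjugation-invariant) cannot be what drives divisibility, since every $\Gamma$ satisfies it.

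\textbf{The subgroup-by-subgroup route cannot work either.} Conjugation shows only that the $G$-class of a subgroup $H$ contributes a multiple of $|G:C_G(H)|$, hence of $|x^G|$. That is the trivial factor you already extracted. The factor $|C_G(x)|$ is invisible term by term: for $G$ abelian, the subgroup $H=\langle x\rangle$ alone contributes $|\langle x\rangle|$ pairs.

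\textbf{The fallback misquotes Frobenius.} Frobenius' theorem gives divisibility by $\gcd(n,|G|)$-type quantities, not by $|C_G(x)|$.

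The missing idea is the actual hypothesis of Hall's Theorem~II. It concerns a system of equations in \emph{one} unknown, and gives divisibility by $\gcd(|C|,n_1,n_2,\dots)$. Here $C$ is the centralizer of the coefficients and $n_i$ is the exponent sum of the unknown in the $i$-th equation. So you should keep $x$ fixed as a coefficient (no passage to $x^G$) and argue as follows.
\begin{enumerate}
\item $\mathrm{Sol}_{G,\ell}(x)$ is precisely the solution set of the system $w(x,Y)=1$ with $w\in F^{(\ell)}$. Finitely many equations suffice: one witness $w_y$ with $w_y(x,y)\neq 1$ for each $y\notin\mathrm{Sol}_{G,\ell}(x)$.
\item All coefficients lie in $\langle x\rangle$, so $C\supseteq C_G(x)$.
\item Since $F^{(\ell)}\subseteq F'$, every $n_i$ equals $0$.
\end{enumerate}
Hence the gcd is $|C|$, and $|C_G(x)|$ divides $|C|$, which divides $|\mathrm{Sol}_{G,\ell}(x)|$. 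This is the paper's proof.

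The feature that separates $\Gamma_\ell$ from the counterexample above is that its relators have exponent sum zero in $b$. Your proposal never isolates this.
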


Note that, in a finite group, $\Sol_G(x) = \Sol_{G,\ell}(x)$ for some $\ell \geq 1$.
The same result holds for any variety of groups containing all abelian groups,
for example nilpotent groups of bounded class.

% it is actually true that if $G$ is perfect then $|N_G(x)|$ divides $\sharp \{g \in G: \langle x,g \rangle =G\}$.

          \vspace{0.1cm}
          \section{Preliminaries} 

    All groups in this paper are finite.
          In 1968, Thompson \cite{Tho68} proved that a finite group whose $2$-generated subgroups are soluble is soluble.
          A short proof was found in 1995 by Flavell \cite{Fla95}.
          
          The \emph{soluble radical} $R(G)$ is the largest normal soluble subgroup of $G$.
         For $x \in G$, we have
         $$ \frac{\langle x,R(G) \rangle}{R(G)} = \frac{\langle x \rangle R(G)}{R(G)} \cong \frac{\langle x \rangle}{\langle x \rangle \cap R(G)} . $$
         This implies that $\langle x,R(G)\rangle$ is soluble, and so
         $$ R(G) \> \subseteq \> \bigcap_{x \in G} \SolG(x) . $$
         The converse inclusion is a striking theorem of Guralnick, Kunyavskii, Plotkin and Shalev \cite{GKPS06},
         whose only known proof requires the Classification of Finite Simple Groups.
        We also refer the reader to the more recent results in \cite{HKMP21}.
        
          The \emph{normalizer} $N_G(x)$ is the set of elements $g \in G$ such that $\langle x \rangle \unlhd \langle x,g \rangle$. This is equivalent asking $\langle x \rangle = \langle x^g \rangle$.
        	For each $h \in N_G(x)$ we have $\langle x,h \rangle = \langle x \rangle \langle h \rangle$, and so
         $$ \langle x, h \rangle / \langle x \rangle \cong \langle h \rangle / (\langle x \rangle \cap \langle h \rangle) . $$
         It follows that $\langle x, h \rangle$ is metacyclic and $N_G(x) \subseteq \Sol_G(x)$.
        Since $N_G(x)$ and $\Sol_G(x)$ only depend on $\langle x \rangle$,
          for every $h \in N_G(x)$ we have $\Sol_G(x^h) = \Sol_G(x)$.
          In particular, $\Sol_G(x)$ is normalized by $N_G(x)$.

         The following famous lemma is the only prerequisite for the proof of Theorem~\ref{thMain}.
         See \cite{Neu79} for the history of this result, which is often misattributed to Burnside but appeared earlier in the work of Cauchy (1845) and Frobenius (1887).
        % We give a proof for the convenience of the reader.

          \begin{lemma}[Cauchy--Frobenius counting lemma] \label{lemCL}
        If a finite group $H$ acts on a finite set $A$, then the cardinality of the set
        $$ \{ ({a,h}) \in A \times H : a^h = a \} $$
        is a multiple of $|H|$. The ratio is the number of $H$-orbits on $A$.
        \end{lemma}
        \begin{comment}
        \begin{proof}
        The cardinality of the set above is $\sum_{a \in A} |C_H(a)|$.
         If $a_1,...,a_k \in A$ are representatives of the $H$-orbits on $A$, then
          $$ \sum_{a \in A} |C_H(a)| = \sum_{i=1}^k |(a_i)^H| |C_H(a_i)| = \sum_{i=1}^k |H| = k |H| , $$ 
          where the second equality follows by the orbit-stabilizer lemma.
        \end{proof}
        \end{comment}

         %\vspace{0.1cm}
        \section{Hai-Reuven's proof and an improvement}

    At first, one might think that the reason for $|C_G(x)|$ to divide $|\Sol_G(x)|$ should be structural. For example, one might hope that $\Sol_G(x)$ is a union of cosets of $C_G(x)$.
    It is easy to see that this is not the case, and something more subtle is underneath.

\begin{theorem}[Hai-Reuven, 2013] \label{thHR}
         Let $G$ be a finite group and $x \in G$. Then $|C_G(x)|$ divides $|\Sol_G(x)|$.
         \end{theorem}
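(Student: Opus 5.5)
The plan is to argue by induction on $|G|$, combining Lemma~\ref{lemCL} with a reduction modulo the centre $Z=Z(G)$.

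\textbf{Case $Z\neq 1$.} Here we pass to $\bar G=G/Z$. Since $Z$ is central, hence soluble and normal, $\langle x,g\rangle$ is soluble if and only if $\langle xZ,gZ\rangle=\langle x,g\rangle Z/Z$ is soluble. So $\Sol_G(x)$ is exactly the full preimage of $\Sol_{\bar G}(xZ)$, and
$$|\Sol_G(x)|=|Z|\,|\Sol_{\bar G}(xZ)|.$$
On the other hand, $Z\le C_G(x)$ and $C_G(x)/Z$ is a subgroup of $C_{\bar G}(xZ)$. Hence $|C_G(x)|/|Z|$ divides $|C_{\bar G}(xZ)|$. By induction, $|C_{\bar G}(xZ)|$ divides $|\Sol_{\bar G}(xZ)|$. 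Multiplying through by $|Z|$ gives the claim.

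\textbf{Case $Z=1$.} We apply Lemma~\ref{lemCL} with $H=C_G(x)$ acting by conjugation on $A=\Sol_G(x)$. This is a genuine action, since conjugation by an element of $C_G(x)$ fixes $x$ and therefore preserves $\Sol_G(x)$. The number of fixed pairs $(g,h)$, that is, pairs with $g\in\Sol_G(x)$, $h\in C_G(x)$ and $g^h=g$, is a multiple of $|C_G(x)|$. We will count these pairs by $h$ instead. For fixed $h\in C_G(x)$, the admissible $g$ form $C_G(h)\cap\Sol_G(x)$. Because $x\in C_G(h)$, this set is exactly $\Sol_{C_G(h)}(x)$. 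Thus
$$|C_G(x)|\ \text{ divides }\ \sum_{h\in C_G(x)}|\Sol_{C_G(h)}(x)|.$$

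Next we group the $h$ into $C_G(x)$-conjugacy classes. Conjugating $h$ by $c\in C_G(x)$ conjugates the pair $(C_G(h),x)$ to $(C_G(h^c),x)$, so the summand is constant on each class. A class has size $|C_G(x)|/|C_G(x)\cap C_G(h)|$, so it contributes
$$|C_G(x)|\cdot\frac{|\Sol_{C_G(h)}(x)|}{|C_{C_G(h)}(x)|}.$$
Whenever $h\ne 1$, we have $C_G(h)<G$, because $h\notin Z(G)=1$. Induction applied to the group $C_G(h)$ then shows that the displayed ratio is an integer, so the whole class contributes a multiple of $|C_G(x)|$.

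The only remaining summand is $h=1$, which contributes $|\Sol_G(x)|$. Subtracting all the other contributions from the total shows that $|C_G(x)|$ divides $|\Sol_G(x)|$.

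\textbf{Main obstacle.} The crux is the identification $C_G(h)\cap\Sol_G(x)=\Sol_{C_G(h)}(x)$. Together with this, the central elements $h$ are the only ones for which induction is unavailable. The centre reduction in the first case is precisely what removes them: if we did not reduce modulo $Z$ first, the counting argument would only show that $|C_G(x)|$ divides $|Z(G)|\,|\Sol_G(x)|$.
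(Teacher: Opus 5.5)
Your proof is correct and follows essentially the same route as the paper. You reduce to $Z(G)=1$ via $G/Z(G)$, then apply the Cauchy--Frobenius lemma to $C_G(x)$ acting on $\Sol_G(x)$ by conjugation, identify the fixed-point sets as $\Sol_{C_G(h)}(x)$, and finish with induction on the proper subgroups $C_G(h)$ for $h\neq 1$ together with orbit--stabilizer on the $C_G(x)$-conjugacy classes.
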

         \begin{proof}
         We work by induction on $|G|$,
           and let $Z=Z(G)$ be the center of $G$.
         It is easy to see that $\frac{C_G(x)}{Z} \leqslant C_{G/Z}(Zx)$, and $\frac{\Sol_G(x)}{Z} = \Sol_{G/Z}(Zx)$.
         Now $|C_G(x)|$ divides $|\Sol_G(x)|$ if and only if $\frac{|C_G(x)|}{|Z|}$ divides $\frac{|\Sol_G(x)|}{|Z|}$,
         and so, by induction, we can assume $Z=1$.
         
         Let $A=\Sol_G(x)$, $H=C_G(x)$, and consider the action of $H$ on $A$ by conjugation.
        By Cauchy--Frobenius counting lemma, the quantity
        $$ \sum_{h \in H} |C_A(h)| $$
        is a multiple of $|H|$.
          For any $h \in H$ we have $x \in C_G(h)$, and
          $$ C_A(h) = C_G(h) \cap \SolG(x) = \mathrm{Sol}_{C_G(h)}(x) . $$
          We write
         $$ \sum_{h \in H} |C_A(h)| = |A| + \sum_{h \in H \setminus 1} |\mathrm{Sol}_{C_G(h)}(x)| , $$
          so it is sufficient to prove that $\sum_{h \in H \setminus 1} |\mathrm{Sol}_{C_G(h)}(x)|$ is a multiple of $|H|$.
          
          Let $h \in H \setminus 1$. 
          Since $h$ is noncentral in $G$, by induction
          we have $|\mathrm{Sol}_{C_G(h)}(x)| = n_h |C_{C_G(h)}(x)|$ for some integer $n_h$.
          We write
          $$ \sum_{h \in H \setminus 1} |\mathrm{Sol}_{C_G(h)}(x)| = \sum_{h \in H \setminus 1} n_h \> |C_{C_G(h)}(x)| . $$
          It is easy to see that $n_h = n_{h^g}$ for each $g \in H$.
          If $1,h_2,...,h_s \in H$ are representatives of the $H$-orbits on $H$ by conjugation, we have
          \begin{equation} \label{eq:3}
          \sum_{h \in H \setminus 1} |\mathrm{Sol}_{C_G(h)}(x)| = \sum_{i=2}^s n_{h_i} |(h_i)^H| |C_{C_G(h_i)}(x)| .
          \end{equation}
         Note that $C_{C_G(h_i)}(x) = C_G(h_i) \cap C_G(x) = C_H(h_i)$.
         By the orbit-stabilizer lemma, the right side of (\ref{eq:3}) is a multiple of $|H|$ as desired.
         \end{proof}

\subsection{A sharper result from Hall's divisibility theorem}

We use the following theorem of P.~Hall \cite{Hal36}:
the number of solutions of a system of equations in one unknown is
divisible by
\[
\gcd(|C|,n_1,n_2,\ldots),
\]
where $C$ is the centralizer of the set of coefficients, and $n_i$ is
the sum of the exponents of the unknown in the $i$-th equation.
See also \cite{GRV12} for a modern generalization.
We write $G^{(n)}$ for the $n$-th term of the derived series.

\begin{proof}[Proof of Theorem~\ref{thHall}]
Let $F=F(X,Y)$ be the free group on $X$ and $Y$,
and set $A:=\mathrm{Sol}_{G,\ell}(x)$.
For each $y \in G\setminus A$, there is a word $w_y\in F^{(\ell)}$
such that $w_y(x,y)\neq 1$.
It is easy to check that $A$ is the set of solutions in $G$ of the system of equations
\[
w_y(x,Y)=1,
\qquad y\in G\setminus A.
\]
Since $F^{(\ell)} \subseteq F^{(1)}$, the sum of the exponents of $Y$ in each equation is
zero. By Hall's theorem \cite[Theorem~II]{Hal36}, $|A|$ is divisible by the order of the centralizer of all the
coefficients occurring in the system.
All coefficients belong to $\langle x\rangle$, so their centralizer
contains $C_G(x)$, and the proof follows.
\end{proof}

The strategy used to prove Theorem~\ref{thHall} is not suitable for handling questions with normalizers.

\begin{remark}
Since $N_G(x) \subseteq \Sol_{G,2}(x)$,
looking at Theorem~\ref{thHall}, a tempting approach for the normalizer-solubilizer conjecture would be to first show that
          $|N_G(x)|$ divides $|\Sol_{G,2}(x)|$, and then that $|\Sol_{G,\ell}(x)|$ divides $|\Sol_{G,\ell+1}(x)|$ for all $\ell \geq 2$.
          However, there are examples in $\Alt(6)$ where $|N_G(x)|$ does not divide $|\Sol_{G,2}(x)|$ (but still divides $|\Sol_G(x)|$).
          \end{remark}

%\vspace{0.1cm}
\section{Normalizers and the relative version}

If $h \in C_G(x)$ then $x \in C_G(h)$,
while in general $h \in N_G(x)$ does not imply $x \in N_G(h)$.
This is the crucial point where Hai-Reuven's proof fails in extending to normalizers.
To overcome this problem, we introduce an auxiliary subgroup $K$ (in the end will be $K=G$) and argue by induction on $|K|$.
Noting that $N_G(x) \cap K$ acts on $\Sol_G(x) \cap K$ by conjugation,
the only obstruction to applying induction to the fixed-point terms is given by
the nontrivial elements in $N_G(x) \cap Z(K)$,
and this is the main technical issue we are left to deal with.

\begin{figure}[htbp]
\centering
\begin{tikzpicture}[scale=0.7]

  % fill Sol_G(x) \cap K in light gray
  \begin{scope}
    \clip (-1,0.5) ellipse (2cm and 3cm);          % Sol_G(x)
    \fill[gray!30] (2,0.5) ellipse (2.5cm and 1cm); % K
  \end{scope}

  % fill N_G(x) \cap K in darker gray
  \begin{scope}
    \clip (-1,0.5) ellipse (1cm and 1.5cm);         % N_G(x)
    \fill[gray!60] (2,0.5) ellipse (2.5cm and 1cm); % K
  \end{scope}

  % draw the sets
  \draw (-1,0.5) ellipse (2cm and 3cm);
  \draw (-1,0.5) ellipse (1cm and 1.5cm);
  \draw (-1,0.5) ellipse (0.02cm and 0.02cm);

  \draw (2,0.5) ellipse (2.5cm and 1cm);

  % labels
  \node at (-3,2.2) {\small $\Sol_G(x)$};
  \node at (-2,1.2) {\small $N_G(x)$};
  \node at (-1,0.2) {\small $x$};
  \node at (4,2) {\small $K$};

\end{tikzpicture}
\caption*{Sets and subgroups involved in Theorem~\ref{thMain}.}
\end{figure}

\begin{proof}[Proof of Theorem~\ref{thMain}]
We set
\[
H:=N_G(x)\cap K,
\qquad
A:=\SolG(x)\cap K,
\]
and argue by induction on $|K|$.
Since $N_G(x)$ normalizes $\Sol_G(x)$, the group $H$ acts on $A$ by conjugation.

Let $T \unlhd H$ be maximal with respect to inclusion with the properties that
\[
T\unlhd K \qquad \mbox{and} \qquad
\langle x,a,T\rangle \text{ is soluble for every } a\in A.
\]

We claim that
\begin{equation} \label{eqClaim}
    (H/T)\cap Z(K/T)=1.
\end{equation}
Let $D$ be the inverse image of \((H/T)\cap Z(K/T)\) in $H$.
Then $D\unlhd K$. Suppose that $D>T$. Fix $a\in A$ and put
\[
U:=\langle x,a,T\rangle ,
\]
so that $U$ is soluble by the choice of $T$.
Moreover,
\[
U\unlhd UD=\langle x,a,D\rangle
\]
and $UD/U$ is abelian. Thus $\langle x,a,D\rangle$ is soluble for
every $a\in A$, contradicting the maximality of $T$.
This proves (\ref{eqClaim}).

For every $a\in A$ and $t\in T$ we have that
\(
\langle x,at\rangle \leqslant \langle x,a,T\rangle
\) is soluble,
and therefore $at\in A$. Hence $A$ is a union of $T$-cosets.
Write $H:=H/T$, and $A:=A/T$ for the set of $T$-cosets contained in $A$.
Note that
\begin{equation} \label{eqQuot}
    |\overline H|=\frac{|H|}{|T|}
\qquad\text{and}\qquad
|\overline A|=\frac{|A|}{|T|}.
\end{equation}

Let $1\neq\overline h\in\overline H$, and let $L$ be the inverse image
in $K$ of $C_{K/T}(\overline h)$. By (\ref{eqClaim}), $L<K$.
Moreover,
\[
C_{\overline H}(\overline h)
   =\frac{H\cap L}{T},
\qquad
C_{\overline A}(\overline h)
   =\frac{A\cap L}{T}.
\]

By induction applied to $L$,
$|N_G(x)\cap L|$ divides $|\SolG(x)\cap L|$.
Since $L \subseteq K$, this says that
$|H\cap L|$ divides $|A\cap L|$, and we obtain
\begin{equation} \label{eqInd}
    |C_{\overline H}(\overline h)|
   \qquad \mbox{divides} \qquad
|C_{\overline A}(\overline h)|.
\end{equation}

By Cauchy--Frobenius counting lemma applied to the action of $\overline H$ on
$\overline A$ by conjugation, we have that
\begin{equation} \label{eqBL}
    \sum_{\overline h\in\overline H}
|C_{\overline A}(\overline h)|
\qquad \mbox{is a multiple of} \qquad |\overline H| .
\end{equation}
By the orbit-stabilizer lemma, for every nonidentity conjugacy class
$\overline h^{\,\overline H}$, the contribution to this sum is
\[
|\overline h^{\,\overline H}|\,
|C_{\overline A}(\overline h)|
=
\frac{|\overline H|}
     {|C_{\overline H}(\overline h)|}
|C_{\overline A}(\overline h)|,
\]
which is a multiple of $|\overline H|$ by (\ref{eqInd}).
Therefore all nonidentity conjugacy classes contribute a multiple of
$|\overline H|$ to (\ref{eqBL}). Hence the identity contribution
does as well, meaning precisely that
$|\overline H|$ divides $|\overline A|$.
The proof follows by (\ref{eqQuot}).
\end{proof}

\vspace{0.2cm} \noindent
 {\bfseries Acknowledgments.}
I heard about the normalizer-solubilizer conjecture during
a talk of Costantino Delizia in Paestum in July 2023.
The same day I had useful conversations with Carmine Monetta,
who pointed me to Hai-Reuven's thesis. My thanks go to both of them.
I would also like to apologize to Derek Holt, whom I urged during several lunches to search for (nonexistent) counterexamples to Theorem~\ref{thMain}.
Finally, I thank Juan Mart\'inez Madrid for telling me about the history of Cauchy--Frobenius counting lemma.

\vspace{0.2cm} \noindent
 {\bfseries AI statement:}
Already in 2023 I had realized that Theorem~\ref{thMain} was the right statement to prove. In the following three years I obtained several reductions, as well as Theorem~\ref{thHall}.
The final proof of Theorem~\ref{thMain} was completed with AI assistance. Specifically, the strategy of introducing the subgroup $T$ and proving (\ref{eqClaim}) was suggested by the AI model. I personally wrote the complete first draft of the paper and AI was used again only for final proofreading.

   \vspace{0.2cm}


\vspace{0.1cm}
\begin{thebibliography}{Z} 

\bibitem{ACSS25} B. Akbari, J. Chuharski, V. Sharan and Z. Slonim,
\textit{Characterization of solubilizers of elements in minimal simple groups},
Comm. Algebra {\bfseries 53} (2025), 1995--2017.

\bibitem{ADM23} B. Akbari, C. Delizia and C. Monetta,
\textit{On the solubilizer of an element in a finite group},
Mediterr. J. Math. {\bfseries 20} (2023), art. 135.

\bibitem{AFS26} B. Akbari, T. Foguel and J. Schmidt,
\textit{Solvabilizer numbers of finite groups},
to appear in Int. J. Algebra Comput. (2026).

\bibitem{ALMM20} B. Akbari, M. L. Lewis, J. Mirzajani and A. R. Moghaddamfar,
\textit{The solubility graph associated with a finite group},
Int. J. Algebra Comput. {\bfseries 30} (2020), 1555--1564.

\bibitem{BLN23} T.C. Burness, A. Lucchini and D. Nemmi,
\textit{On the soluble graph of a finite group},
J. Combin. Theory Ser. A {\bfseries 194} (2023), 105708.

\bibitem{Fla95} P. Flavell,
\textit{Finite groups in which every two elements generate a soluble subgroup},
Invent. Math. {\bfseries 121} (1995), 279--285.

%\bibitem{Fla02} P. Flavell,
%\textit{A characterisation of $F_2(G)$},
%J. Algebra {\bfseries 255} (2002), 271--287.

\bibitem{GRV12} C. Gordon and F. Rodriguez-Villegas,
\textit{On the divisibility of $\sharp \mathrm{Hom}(\Gamma,G)$ by $|G|$},
J. Algebra {\bfseries 350} (2012), 300--307.

\bibitem{GKPS06} R. Guralnick, B. Kunyavski\u{\i}, E. Plotkin and A. Shalev,
\textit{Thompson-like characterizations of the solvable radical},
J. Algebra {\bfseries 300} (2006), 363--375.

\bibitem{HR13} D. Hai-Reuven,
\textit{Non-solvable graph of a finite group and solvabilizers},
 MSc thesis (supervisor B.~Kunyavski\u{\i}),
 available at \texttt{\MYhref[mypink]{https://arxiv.org/pdf/1307.2924}{arxiv.org/pdf/1307.2924}} (2013).

\bibitem{Hal36} P. Hall,
\textit{On a theorem of Frobenius},
Proc. Lond. Math. Soc. {\bfseries 40} (1936), 468--501.

\bibitem{HKMP21} P. Hauck, L. S. Kazarin, A. Mart\'inez-Pastor and M. D. P\'erez-Ramos,
\textit{Thompson-like characterization of solubility for products of finite groups},
Ann. Mat. Pura Appl. {\bfseries 200} (2021), 337--362.

%\bibitem{KM14} A.A. Klyachko and A.A. Mkrtchyan,
%\textit{How many tuples of group elements have a given property?},
%Int. J. Algebra Comput. {\bfseries 24} (2014), 413--428.

\bibitem{Luc24} A. Lucchini,
\textit{Solubilizers in profinite groups},
J. Algebra {\bfseries 647} (2024), 619--632.

\bibitem{Mou26} H. Mousavi,
\textit{On the normalizer-solubilizer conjecture},
Int. J. Group Theory {\bfseries 15} (2026), 107--122.

\bibitem{MPZ25} H. Mousavi, M. Poozesh and Y. Zamani,
\textit{The impact of the solubilizer of an element on the structure of a finite group},
Ricerche Mat. {\bfseries 74} (2025), 603--612.

\bibitem{Neu79} P.M. Neumann,
\textit{A lemma that is not Burnside's},
Math. Sci. {\bfseries 4} (1979), 133--141.

\bibitem{Tho68} J.G. Thompson,
\textit{Nonsolvable finite groups all of whose local subgroups are solvable},
Bull. Amer. Math. Soc. {\bfseries 74} (1968), 383--437.

   \end{thebibliography}
\end{document}